\newtheorem{theorem}{Theorem}[section]
\newtheorem{lemma}[theorem]{Lemma}
\newtheorem{corollary}[theorem]{Corollary}
\newtheorem{problem}[theorem]{Problem}
\theoremstyle{definition}
\newtheorem{example}[theorem]{Example}
\newtheorem{remark}[theorem]{Remark}
\begin{document}

\title[Symmetric polynomials in the variety generated by Grassmann algebras]
{Symmetric polynomials in the variety \\ generated by Grassmann algebras}
\author[Nazan Akdo\u{g}an, {\c S}ehmus F{\i}nd{\i}k]
{Nazan Akdo\u{g}an, {\c S}ehmus F{\i}nd{\i}k}
\address{Department of Mathematics,
\c{C}ukurova University, 01330 Balcal\i,
 Adana, Turkey}
\address{Department of Mathematics,
\.Istanbul Technical University, \.Istanbul, Turkey}
\email{nakdogan@itu.edu.tr}
\address{Department of Mathematics,
\c{C}ukurova University, 01330 Balcal\i,
 Adana, Turkey}
\email{sfindik@cu.edu.tr}

\subjclass[2010]{15A75, 13A50, 16R30.}
\keywords{Grassmann algebras, symmetric polynomials}

\begin{abstract}
Let $\mathcal{G}$ denote the variety generated by infinite dimensional Grassmann algebras; i.e.,
the collection of all unitary associative algebras satisfying the identity $[[z_1,z_2],z_3]=0$, where $[z_i,z_j]=z_iz_j-z_jz_i$.
Consider the free algebra $F_3$ in $\mathcal{G}$ generated by $X_3=\{x_1,x_2,x_3\}$.
We call a polynomial $p\in F_3$ {\it symmetric} if it is preserved under the action of the symmetric group $S_3$ on generators, i.e., $p(x_1,x_2,x_3)=p(x_{\xi1},x_{\xi2},x_{\xi3})$
for each permutation $\xi\in S_3$. The set of symmetric polynomials form the subalgebra $F_3^{S_3}$ of invariants of the group $S_3$ in $F_3$.
The commutator ideal $F_3'$ of the algebra $F_3$ has a natural left $K[X_3]$-module structure, and $(F_3')^{S_3}$
is a left $K[X_3]^{S_3}$-module.
We give a finite free generating set for the $K[X_3]^{S_3}$-module $(F_3')^{S_3}$. 
\end{abstract}

 \maketitle

\section{Introduction}

Let $K[X_n]$ be the commutative associative unitary polynomial algebra over a field $K$ of  characteristic zero generated by $X_n=\{x_1,\dots,x_n\}$. We consider the general linear group $\text{GL}_n(K)$ which acts on $x_1,\dots,x_n$. This action is extended to an action of $\text{GL}_n(K)$ on $K[X_n]$ canonically, which is an automorphism of the polynomial algebra $K[X_n]$. If $G\leq \text{GL}_n(K)$, then the algebra $K[X_n]^G$ of invariants of $G$ is defined to be the set of polynomials $p(X_n)\in K[X_n]$ such that $g(p(X_n))=p(X_n)$ for each $g\in G$.

One may see the works of Hilbert (\cite{H},\cite{H2}) published in the last years of $19^{th}$ century, dealing with the finite generation of the algebra of invariants and
finite bases of syzygies, which are known as the first and the second fundamental theorems of invariant theory.
At the 1900 Paris congress, {\it Hilbert's $14^{th}$ problem} \cite{H3} was proposed by himself to generalize his result to any subgroup of $\text{GL}_n(K)$. A special case of the problem is to determine whether the algebra $K[X_n]^G$ of invariants is finitely generated for any subgroup $G$ of $\text{GL}_n(K)$. 
Some partial affirmative results were obtained in response to the problem.
For example, Noether \cite{NE} showed that the algebra $K[X_n]^G$ of invariants is finitely generated when $G$ is a finite subgroup of $GL_n(K)$.
Another approach in the same direction was given by Shephard and Todd \cite{S} and Chevalley \cite{C} which considers a special case, when the finite group $G$ is generated by reflections, and $K[X_n]^G$ is generated by $n$ algebraically independent homogeneous elements.
Finally, Nagata \cite{N} gave a counterexample to Hilbert's $14^{th}$ problem in 1959.  
One can see the survey on this problem by Humpreys \cite{HJE} for more information.

The problem were applied to noncommutative invariant theory, as well. Let $K\langle X_n \rangle$ denote a free associative algebra.
Kharchenko \cite{K2} extended Noether's result proving that $(K\langle X_n \rangle / T)^G$, where $T$ is a $T$-ideal, is finitely generated for every finite subgroup $G$ of $\text{GL}_n(K)$ if and only if $K\langle X_n \rangle / T$ satisfies the ascending chain condition on two-sided ideals. Furthermore, Dicks and Formanek \cite{D} and Kharchenko \cite{K2} identified the condition on finite groups $G$ for finite generation of $K\langle X_n \rangle ^G$: The algebra of invariants $K\langle X_n \rangle ^G$ of a finite subgroup $G$ of $\text{GL}_n(K)$ is finitely generated if and only if $G$ is a cyclic group acting by scalar multiplication. Lane \cite{L} and Kharchenko \cite{K} showed that $K\langle X_n \rangle ^G$ is a free algebra for any finite subgroup of $\text{GL}_n(K)$. Long after, Domokos \cite{DM} showed that $(K\langle X_n \rangle/T)^G$ is generated by algebraically independent elements for a finite group $G$ if and only if $G$ is a pseudo-reflection group and $T$ contains the polynomial $[[z_1,z_2],z_2]$. We recommend Formanek's survey \cite{FE} for more detail on the noncommutative generalizations of the problem. 

The most basic example of an algebra of invariants is defined by the symmetric polynomials.
Let $G=S_n$ be the group of permutation matrices in $\text{GL}_n(K)$ acting on the algebra $K[X_n]$ by permuting  the variables; i.e.,
\[
\xi\cdot p(x_1,\ldots,x_n)=p(x_{\xi1},\ldots,x_{\xi n}) \ , \ \ \xi\in S_n \ , \ \ p\in K[X_n].
\]
The algebra $K[X_n]^{S_n}$ of invariants of $S_n$ is a subalgebra of symmetric polynomials in $K[X_n]$. It is well known that $K[X_n]^{S_n}$ is generated by the elementary symmetric polynomials $\sigma_j=\sum x_{i_1}\cdots x_{i_j}$, $i_1<\cdots<i_j$, $j=1,\dots,n$ which are algebraically independent over $K$. Furthermore, $K[X_n]^{S_n}$ is isomorphic to a polynomial algebra in $n$ variables since $\sigma_1,\dots,\sigma_n$ are algebraically independent. Besides, the set $\{\sum_{i=1}^n x_i^k : k=1,\dots,n \}$ is another generating set for $K[X_n]^{S_n}$.

When it comes to noncommutative-nonassociative algebras, one may consider Lie algebras, Leibniz algebras, etc. Bryant and Papistas \cite{B} 
showed that the algebra of invariants in the free Lie algebra is not finitely generated for a finite nontrivial group of automorphisms of the algebra.
A similar result is obtained for free metabelian Lie algebra in the same paper. Some recent works related with the Lie and Leibniz generalizations of the problem have been carried out. Let $L_n$ and $\mathcal L_n$ be the free metabelian Lie and Leibniz algebras
of rank $n$, respectively. In both cases the commutator ideals $L_n'$ and $\mathcal L_n'$ of $L_n$ and $\mathcal L_n$, respectively, are right $K[X_n]$-modules. A minimal infinite generating set for the Lie algebra
$L_2^{S_2}$ was given in \cite{F}, while a finite generating set for the $K[X_n]^{S_n}$-module $(L_n')^{S_n}$ was provided by Drensky et al. \cite{DV2}. Besides, characterization of the symmetric polynomials in $(\mathcal L_n')^{S_n}$ as a $K[X_n]^{S_n}$-module was given in \cite{FO}.

An important class of noncommutative algebras is the variety generated by the Grassmann identity $[[z_1,z_2],z_3]=0$. Let $F_n$ be the free algebra of rank $n$ in this variety.
The symmetric polynomials in $F_2^{S_2}$ have been studied in \cite{Na}.
In this paper, we consider the free associative algebra $F_3$ of rank $3$ generated by $X_3$ over a field $K$ of characeristic zero. Its commutator ideal $F_3'=F_3[F_3,F_3]F_3$
is a left $K[X_3]$-module. We investigate the algebra $F_3^{S_3}$ of invariants of ${S_3}$,
and give a finite free generating set for $(F_3')^{S_3}$ as a left $K[X_3]^{S_3}$-module.

\section{Preliminaries}

Let $K$ be a field of characteristic zero and let $K\langle Y_n\rangle$ be the free associative algebra over $K$ with the generating set $Y_n=\{y_1,\dots,y_n\}$.
An element $f$ in the algebra $K\langle Y_n\rangle$ is a polynomial identity of an algebra $A$ if $f(a_1,\dots,a_n)=0$ for all $a_1,\dots,a_n\in A$. An algebra $A$ is called a {\it polynomial identity algebra} or simply a $\text{PI}$-{\it algebra} if it has a nontrivial polynomial identity. The set $T(A)$ of all polynomial identities of $A$, which are invariant under all endomorphisms of $K\langle Y_n\rangle$, forms an ideal of $K\langle Y_n\rangle$. This ideal is called the $T$-ideal of $A$. The class of all associative algebras satisfying the polynomial identities from $T(A)$ is the variety generated by the algebra $A$.

Now let $I$ be the ideal of $K\langle Y_n\rangle$ generated by the elements of the form $y_iy_j+y_jy_i$, $1\leq i,j\leq n$. The Grassmann algebra is the factor algebra $K\langle Y_n\rangle / I$, generated by the elements $y_i+I$, $1\leq i \leq n$, over the field $K$. The Grassmann algebra is a $\text{PI}$-algebra satisfying the polynomial identity $[[z_1,z_2],z_3]=0$ , where $[z_1,z_2]=z_1z_2-z_2z_1$. Krakowski and Regev \cite{KD} proved that the $T$-ideal of the Grassmann algebra is generated by this polynomial identity. 
See also \cite{GK} on the identities of the Grassmann algebras when the based field is of positive characteristic, \cite{Cen} for graded identities of the Grassmann algebra,
and \cite{M} on Grassmann algebras of graphs.

The variety $\mathcal{G}$ generated by the Grassmann algebra contains unitary associative algebras
satisfying the polynomial identity
\[
[[z_1,z_2],z_3]=[z_1,z_2]z_3-z_3[z_1,z_2]=(z_1z_2-z_2z_1)z_3-z_3(z_1z_2-z_2z_1)=0,
\]
over the field $K$. Recall that the above identity implies
\begin{equation}\label{identityinparticular}
[z_1,z_2][z_3,z_4]=-[z_1,z_3][z_2,z_4].
\end{equation}

\

Let $F_n=F_n(\mathcal{G})$ be the free algebra in $\mathcal{G}$ of rank $n$ 
generated by $X_n=\{x_1,\ldots,x_n\}$ over $K$.
The commutator ideal $F_n'=F_n[F_n,F_n]F_n$
of the algebra $F_n$ has the following basis via \cite{DV}.
\[
x_1^{a_1}\cdots x_{n}^{a_n}[x_{i_1},x_{i_2}]\cdots[x_{i_{2c-1}},x_{i_{2c}}], \ a_i \geq0,\  i_1 >\cdots>i_{2c},\ c\geq1.
\]
The identity (\ref{identityinparticular}) implies that $[x_{i_1},x_{i_2}][x_{i_3},x_{i_4}]=0$ if some of $x_{i_j}$'s coincide, since
\[
[x_{i_1},x_{i_2}][x_{i_3},x_{i_4}]=-[x_{i_1},x_{i_3}][x_{i_2},x_{i_4}]=[x_{i_1},x_{i_4}][x_{i_2},x_{i_3}]
\]
and $[x_{i_j},x_{i_j}]=x_{i_j}^2-x_{i_j}^2=0$. This means that
\begin{equation}\label{theorem}
[x_2,x_1][x_2,x_1]=[x_3,x_1][x_2,x_1]=[x_3,x_2][x_2,x_1]=0
\end{equation}
\[
[x_2,x_1][x_3,x_1]=[x_3,x_1][x_3,x_1]=[x_3,x_2][x_3,x_1]=0
\]
\[
[x_2,x_1][x_3,x_2]=[x_3,x_1][x_3,x_2]=[x_3,x_2][x_3,x_2]=0
\]
or
\[
(x_2x_1-x_1x_2)[x_2,x_1]=(x_3x_1-x_1x_3)[x_2,x_1]=(x_3x_2-x_2x_3)[x_2,x_1]=0
\]
\[
(x_2x_1-x_1x_2)[x_3,x_1]=(x_3x_1-x_1x_3)[x_3,x_1]=(x_3x_2-x_2x_3)[x_3,x_1]=0
\]
\[
(x_2x_1-x_1x_2)[x_3,x_2]=(x_3x_1-x_1x_3)[x_3,x_2]=(x_3x_2-x_2x_3)[x_3,x_2]=0
\]
Hence we have the followings in $F_2'$ and $F_3'$, respectively, as a consequence of Grassmann identity.
\[
x_1x_2w=x_2x_1w, \ \ w\in F_2' \ , 
\]
\[
x_1x_2x_3w=x_1x_3x_2w=x_2x_1x_3w=x_2x_3x_1w=x_3x_1x_2w=x_3x_2x_1w, \ \ w\in F_3'.
\]
Therefore, $F_2'$ and $F_3'$ can be considered as a left $K[X_2]$-module and $K[X_3]$-module, respectively.
Note that $F_n'$ is not a left $K[X_n]$-module when $n\geq4$, since
\[
x_1x_2[x_3,x_4]\neq x_2x_1[x_3,x_4]
\]
as a consequence of the fact that $[x_1,x_2][x_3,x_4]\neq0$.
The following result is a direct consequence of the basis and equalities provided above.

\begin{corollary}\label{free}
The commutator ideal $F_2'$ is a free left $K[X_2]$-module generated by $[x_2,x_1]$, and $F_3'$ is a free left $K[X_3]$-module 
 with generators $[x_2,x_1]$, $[x_3,x_1]$, and $[x_3,x_2]$.
\end{corollary}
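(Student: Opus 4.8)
The plan is to deduce everything from the explicit basis of $F_n'$ quoted from \cite{DV} together with the straightening relations derived above, so the proof is essentially a bookkeeping argument. First I would recall that a typical basis element of $F_n'$ has the form $x_1^{a_1}\cdots x_n^{a_n}[x_{i_1},x_{i_2}]\cdots[x_{i_{2c-1}},x_{i_{2c}}]$ with $i_1>\cdots>i_{2c}$ and $c\ge 1$, and observe that by relation~(\ref{theorem}) (the vanishing of any product of two commutators sharing an index) every basis element with $c\ge 2$ is already zero when $n=2$ or $n=3$, because there are not enough distinct indices among $x_1,x_2,x_3$ to fill four slots $i_1>i_2>i_3>i_4$ without repetition. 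Hence for $n\le 3$ the only surviving basis elements are those with exactly one commutator factor, i.e. monomials $x_1^{a_1}\cdots x_n^{a_n}[x_j,x_k]$ with $j>k$.

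Next I would record that, after this reduction, $F_2'$ is spanned by the elements $x_1^{a_1}x_2^{a_2}[x_2,x_1]$ and $F_3'$ by the elements $x_1^{a_1}x_2^{a_2}x_3^{a_3}[x_j,x_k]$ with $(j,k)\in\{(2,1),(3,1),(3,2)\}$; these are exactly $K[X_2]\cdot[x_2,x_1]$ and $K[X_3]\cdot[x_2,x_1]+K[X_3]\cdot[x_3,x_1]+K[X_3]\cdot[x_3,x_2]$ once we know the left action is well defined, which is precisely the content of the displayed identities $x_1x_2w=x_2x_1w$ in $F_2'$ and the six-fold equality $x_1x_2x_3w=\cdots=x_3x_2x_1w$ in $F_3'$. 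So generation as a left module is immediate; what remains is \emph{freeness}, i.e. that there are no nontrivial $K[X_n]$-linear relations among the stated generators.

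For freeness, suppose $f_1[x_2,x_1]+f_2[x_3,x_1]+f_3[x_3,x_2]=0$ in $F_3'$ with $f_1,f_2,f_3\in K[X_3]$ (the $F_2'$ case is the obvious specialization). Writing each $f_s$ as a $K$-linear combination of monomials $x_1^{a_1}x_2^{a_2}x_3^{a_3}$ and moving every scalar monomial to the canonically ordered form $x_1^{a_1}x_2^{a_2}x_3^{a_3}$ using the straightening relation, the left-hand side becomes a $K$-linear combination of the basis elements $x_1^{a_1}x_2^{a_2}x_3^{a_3}[x_j,x_k]$; since distinct $(a_1,a_2,a_3,j,k)$ give distinct basis elements, all coefficients must vanish, forcing $f_1=f_2=f_3=0$. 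The only subtlety — and the step I expect to be the main (though still mild) obstacle — is to check that the straightening process is \emph{consistent}, i.e. that rewriting $x_1^{a_1}x_2^{a_2}x_3^{a_3}[x_j,x_k]$ into canonical commutator-on-the-right form does not secretly identify two originally distinct generators' contributions; this is where one must invoke that the elements listed in \cite{DV} form an honest $K$-basis (not merely a spanning set) of $F_3'$, together with the fact that the straightening relations among the $x_i$-monomials do not mix the three commutator types $[x_2,x_1],[x_3,x_1],[x_3,x_2]$. Granting that, the linear independence of the $x_1^{a_1}x_2^{a_2}x_3^{a_3}[x_j,x_k]$ over $K$ yields freeness over $K[X_3]$, and the corollary follows.
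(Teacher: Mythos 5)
Your proposal is correct and takes essentially the same route as the paper, which states the corollary as a direct consequence of the \cite{DV} basis together with the straightening equalities $x_ix_jw=x_jx_iw$ for $w\in F_n'$, $n\le 3$ --- exactly the bookkeeping you spell out (for $n\le 3$ the basis contains no elements with $c\ge 2$, since four strictly decreasing indices are impossible, so only the single-commutator elements $x_1^{a_1}x_2^{a_2}x_3^{a_3}[x_j,x_k]$ remain, and their $K$-linear independence gives freeness). Your ``subtlety'' about consistency of the rewriting is already absorbed in the cited fact that these elements form an honest $K$-basis, so no extra argument is needed.
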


A polynomial $s(X_n)\in F_n$ is called symmetric if $s(x_1,\ldots,x_n)=s(x_{\xi1},\ldots,x_{\xi n})$ for all $\xi\in S_n$.
Symmetric polynomials form a subalgebra $F_n^{S_n}$ of $F_n$ called the algebra of invariants of the symmetric group $S_n$.
Clearly the algebra $(F_2')^{S_2}$ and $(F_3')^{S_3}$ can be considered as a left $K[X_2]^{S_2}$-module and a left $K[X_3]^{S_3}$-module, respectively.

Let us initiate by $n=2$. The next theorem gives the description of the $K[X_2]^{S_2}$-module $(F_2')^{S_2}$. 
Although stated in \cite{Na}, we give a short sketch of the proof for the completeness of our paper.

\begin{theorem}\label{2}
The algebra $(F_2')^{S_2}$ of symmetric polynomials in the commutator ideal $F_2'$ is generated by the single element
$(x_2-x_1)[x_2,x_1]$ as a left $K[X_2]^{S_2}$-module.
\end{theorem}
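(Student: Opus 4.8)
The plan is to exploit the $K[X_2]$-module structure of $F_2'$ from Corollary~\ref{free}: every element of $F_2'$ is uniquely of the form $f(x_1,x_2)[x_2,x_1]$ with $f \in K[X_2]$. First I would determine precisely when such an element is symmetric. Applying the transposition $\xi = (1\,2)$ sends $[x_2,x_1]$ to $[x_1,x_2] = -[x_2,x_1]$, so $\xi \cdot \big(f(x_1,x_2)[x_2,x_1]\big) = -f(x_2,x_1)[x_2,x_1]$ (using that the $K[X_2]$-action is by the symmetric variables, which commute past the commutator up to the sign just computed). Hence $f(x_1,x_2)[x_2,x_1] \in (F_2')^{S_2}$ if and only if $f(x_2,x_1) = -f(x_1,x_2)$, i.e.\ $f$ is an antisymmetric polynomial in $K[X_2]$.

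Next I would invoke the classical structure of antisymmetric polynomials in two variables: $f$ is antisymmetric if and only if $f = (x_2 - x_1) g$ for some symmetric $g \in K[X_2]^{S_2}$. (Indeed, antisymmetry forces $f$ to vanish on the diagonal $x_1 = x_2$, hence to be divisible by $x_2 - x_1$; the quotient is then symmetric.) Combining the two observations, $(F_2')^{S_2}$ consists exactly of the elements $g(x_1,x_2)\,(x_2-x_1)[x_2,x_1]$ with $g \in K[X_2]^{S_2}$, which is precisely the statement that $(x_2-x_1)[x_2,x_1]$ generates $(F_2')^{S_2}$ as a left $K[X_2]^{S_2}$-module. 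Freeness of this generator follows from the freeness of $F_2'$ over $K[X_2]$ in Corollary~\ref{free}, since $g \mapsto g(x_2-x_1)[x_2,x_1]$ is injective.

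The only mild subtlety — and the step I would be most careful about — is the bookkeeping of the $S_2$-action on the module: one must check that for $f \in K[X_2]$ the action satisfies $\xi \cdot (f \cdot w) = (\xi \cdot f)\cdot(\xi \cdot w)$ with $w = [x_2,x_1]$, which is immediate since $\xi$ is an algebra automorphism of $F_2$ and the left module action is just multiplication inside $F_2$. Everything else is the standard one-variable-pair invariant theory recalled above, so the argument is short; the real work of the paper is the analogous but substantially harder analysis for $n = 3$, where $F_3'$ is free of rank three over $K[X_3]$ and the antisymmetry condition is replaced by a system of conditions coming from the $S_3$-action permuting the three generators $[x_2,x_1],[x_3,x_1],[x_3,x_2]$.
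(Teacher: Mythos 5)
Your proposal is correct and is essentially the paper's own argument: both use the freeness of $F_2'$ over $K[X_2]$ (Corollary~\ref{free}) to reduce invariance to the antisymmetry condition $p(x_1,x_2)=-p(x_2,x_1)$, and then factor $p=q\,(x_2-x_1)$ with $q\in K[X_2]^{S_2}$. Your additional remark on freeness of the generator is a harmless bonus not claimed in the statement.
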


\begin{proof}
Let $e(x_1,x_2)=p(x_1,x_2)[x_2,x_1]\in (F_2')^{S_2}$, where $p(x_1,x_2)\in K[X_2]$. Then $e(x_1,x_2)=e(x_2,x_1)$
implies that $(p(x_1,x_2)+p(x_2,x_1))[x_2,x_1]=0$ 
in the free module generated by $[x_2,x_1]$. Thus the polynomial $p(x_1,x_2)\in K[X_2]$
can be expressed as $q(x_1,x_2)(x_2-x_1)$ for some $q(x_1,x_2)\in K[X_2]^{S_2}$.
\end{proof}

The results obtained in this paper can be considered as a next step of the work done in Theorem \ref{2}.

\section{Main Results}

In this section, we study the algebra $(F_3')^{S_3}$ of symmetric polynomials in the commutator ideal $F_3'$
of the algebra $F_3$. We give explicit form of symmetric polynomials in the algebra $(F_3')^{S_3}$.
We also provide a finite set of free generators for $(F_3')^{S_3}$ as a module of
$K[X_3]^{S_3}$.
In order to study the algebra $(F_3')^{S_3}$, we work in the commutator ideal $F_3'$,
which is freely generated by $[x_2,x_1]$, $[x_3,x_1]$, and $[x_3,x_2]$ as a $K[X_3]$-module due to Corollary \ref{free}.

In the next theorem, we provide the description of $(F_3')^{S_3}$ as a left $K[X_3]^{S_3}$-module.

\begin{theorem}\label{forms}
A polynomial $f\in F_3'$ is symmetric if and only if 
\[
f(x_1,x_2,x_3)=p(x_1,x_2,x_3)[x_2,x_1]+p(x_1,x_3,x_2)[x_3,x_1]+p(x_2,x_3,x_1)[x_3,x_2]
\]
for some $p\in K[X_3]$ such that $p(x_1,x_2,x_3)=-p(x_2,x_1,x_3)$.
\end{theorem}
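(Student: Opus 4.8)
The plan is to argue by the same mechanism as in Theorem \ref{2}: write an arbitrary element of $F_3'$ in the free $K[X_3]$-module coordinates $[x_2,x_1]$, $[x_3,x_1]$, $[x_3,x_2]$ (Corollary \ref{free}), impose invariance under $S_3$, and translate the resulting relations among polynomial coefficients into the single symmetry condition on $p$. First I would observe that $S_3$ acts on the ordered triple of generators $([x_2,x_1],[x_3,x_1],[x_3,x_2])$ by permuting the underlying variables: each transposition sends this triple to a signed permutation of itself (e.g.\ the transposition $(1\,2)$ fixes $[x_2,x_1]$ up to sign and swaps $[x_3,x_1]\leftrightarrow[x_3,x_2]$, while $(2\,3)$ swaps $[x_2,x_1]\leftrightarrow[x_3,x_1]$ and fixes $[x_3,x_2]$ up to sign). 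So the action on $F_3'\cong K[X_3]^{\oplus 3}$ is a twisted permutation representation, and I would record these formulas explicitly at the start.

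The easy direction is to check that the displayed $f$ is symmetric. Writing $f=\sum p(\text{cyclic/anticyclic shift})\,[\,\cdot\,,\cdot\,]$, one applies a set of generators of $S_3$ — say $(1\,2)$ and $(1\,2\,3)$ — and uses the sign-and-permutation formulas together with the hypothesis $p(x_1,x_2,x_3)=-p(x_2,x_1,x_3)$ to see that the three summands are permuted among themselves, with signs cancelling. This is a direct substitution; I would just exhibit the computation for the two generators.

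For the forward direction, suppose $f=p_1[x_2,x_1]+p_2[x_3,x_1]+p_3[x_3,x_2]$ with $p_i\in K[X_3]$, and write out $\xi\cdot f=f$ for the generators $\xi=(1\,2),(2\,3)$ (or $(1\,2),(1\,2\,3)$). Because the module is \emph{free}, equality of the two expressions forces equality of the coefficient of each basis element $[x_i,x_j]$. This yields a small linear system relating $p_1,p_2,p_3$ and their images under permuted variables. I expect it to collapse to: $p_2(x_1,x_2,x_3)=p_1(x_1,x_3,x_2)$ and $p_3(x_1,x_2,x_3)=p_1(x_2,x_3,x_1)$ (so $p_2,p_3$ are determined by $p_1=:p$), together with the residual constraint coming from the transposition that fixes $[x_2,x_1]$ up to sign, namely $p(x_1,x_2,x_3)+p(x_2,x_1,x_3)=0$. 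Extracting exactly these identities — and checking that no further independent relation survives — is the one place that needs care; everything else is bookkeeping.

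The main obstacle, then, is purely organizational rather than conceptual: keeping track of which basis commutator each permutation sends things to, and which sign it picks up, so that the coefficient-matching is done correctly and one is certain the list of relations is complete (not just necessary but sufficient). Once the relations are pinned down, relabelling $p:=p_1$ gives the stated form with $p$ antisymmetric in its first two arguments, and conversely any such $p$ produces a symmetric $f$ by the easy direction, so the characterization is an equivalence.
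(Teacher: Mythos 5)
Your proposal is correct and follows essentially the same route as the paper: expand $f$ in the free $K[X_3]$-module basis $[x_2,x_1]$, $[x_3,x_1]$, $[x_3,x_2]$ (Corollary \ref{free}), compare coefficients under the action of generating permutations to get $q(x_1,x_2,x_3)=p(x_1,x_3,x_2)$, $r(x_1,x_2,x_3)=p(x_2,x_3,x_1)$ and $p(x_1,x_2,x_3)=-p(x_2,x_1,x_3)$, and verify the converse by direct substitution for two generators of $S_3$. The signed-permutation bookkeeping you flag is exactly what the paper's displayed chain of coefficient equalities records, so no gap remains beyond writing out that routine computation.
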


\begin{proof} Let $f(x_1,x_2,x_3)$ be an arbitrary element in $(F_3')^{S_3}\subset F_3'$ of the form
\[
f(x_1,x_2,x_3)=p(x_1,x_2,x_3)[x_2,x_1]+q(x_1,x_2,x_3)[x_3,x_1]+r(x_1,x_2,x_3)[x_3,x_2],
\]
for some $p,q,r\in K[X_3]$. The fact that
\[
f(x_1,x_2,x_3)=f(x_{\xi1},x_{\xi2},x_{\xi3}), \ \ \xi\in S_3,
\]
combined by Corollary \ref{free} imply the followings.
	\begin{align*}
	p(x_1,x_2,x_3)&=q(x_1,x_3,x_2)=-p(x_2,x_1,x_3)=-q(x_2,x_3,x_1)\\&=r(x_3,x_1,x_2)=-r(x_3,x_2,x_1).
	\end{align*}
One may reduce the equalities above to 
\[
q(x_1,x_2,x_3)=p(x_1,x_3,x_2) \ , \ \ r(x_1,x_2,x_3)=p(x_2,x_3,x_1)
\]
such that $p(x_1,x_2,x_3)=-p(x_2,x_1,x_3)$.

\

\noindent Conversely, let $f(x_1,x_2,x_3)$ satisfy the statement of the theorem, i.e.,
\[
f(x_1,x_2,x_3)=p(x_1,x_2,x_3)[x_2,x_1]+p(x_1,x_3,x_2)[x_3,x_1]+p(x_2,x_3,x_1)[x_3,x_2]
\]
for some $p\in K[X_3]$ such that $p(x_1,x_2,x_3)=-p(x_2,x_1,x_3)$. We have to show that
$f(x_1,x_2,x_3)$ is preserved under the action of transpositions $(12)$ and $(13)$ since they generate the symmetric group $S_3$.
We start with the action of $(12)$.
\begin{align}
f(x_2,x_1,x_3)=&p(x_2,x_1,x_3)[x_1,x_2]+p(x_2,x_3,x_1)[x_3,x_2]+p(x_1,x_3,x_2)[x_3,x_1]\nonumber\\
=&-p(x_1,x_2,x_3)[x_1,x_2]+p(x_1,x_3,x_2)[x_3,x_1]+p(x_2,x_3,x_1)[x_3,x_2]\nonumber\\
=&p(x_1,x_2,x_3)[x_2,x_1]+p(x_1,x_3,x_2)[x_3,x_1]+p(x_2,x_3,x_1)[x_3,x_2]\nonumber
\end{align}
Similarly, one may easily show that $f(x_3,x_2,x_1)=f(x_1,x_2,x_3)$.
\end{proof}	

\begin{remark}
Let $f$ be a symmetric polynomial in $(F_3')^{S_3}$ as stated in Theorem \ref{forms}. Then one may easily verify that $p=p_1p_2$ for some $p_1\in K[X_3]^{S_3}$ and some $p_2\in K[X_3]$ of the form
\[
p_2(x_1,x_2,x_3)=\sum_{0\leq a<b, \ 0\leq c}\varepsilon_{abc}(x_1^ax_2^b-x_1^bx_2^a)x_3^c
\]
where $\varepsilon_{abc}\in K$. Let us denote
\[
f_{a,b,c}=(x_1^ax_2^b-x_1^bx_2^a)x_3^c[x_2,x_1]+(x_1^ax_3^b-x_1^bx_3^a)x_2^c[x_3,x_1]+(x_2^ax_3^b-x_2^bx_3^a)x_1^c[x_3,x_2]
\]
for nonnegative integers $a,b,c$. This immediately implies the following result.
\end{remark}

\begin{corollary}\label{linear}
The set $\{f_{a,b,c} \mid 0\leq a<b, 0\leq c\}$
is a basis for the $K[X_3]^{S_3}$-module $(F_3')^{S_3}$; i.e., $f\in(F_3')^{S_3}$ if and only if
\[
f(x_1,x_2,x_3)=\sum_{0\leq a<b, \ 0\leq c} \delta_{abc}f_{a,b,c}
\]
for some uniquely determined polynomials $\delta_{abc}\in K[X_3]^{S_3}$.
\end{corollary}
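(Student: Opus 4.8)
The plan is to deduce the corollary from a single $K[X_3]^{S_3}$-module isomorphism, after which the whole assertion reduces to a transparent statement about antisymmetric commutative polynomials. Let $A\subseteq K[X_3]$ be the subspace of all $p$ with $p(x_1,x_2,x_3)=-p(x_2,x_1,x_3)$, and define $\Phi\colon A\to F_3'$ by
\[
\Phi(p)=p(x_1,x_2,x_3)[x_2,x_1]+p(x_1,x_3,x_2)[x_3,x_1]+p(x_2,x_3,x_1)[x_3,x_2].
\]
First I would record that $\Phi$ maps $A$ onto $(F_3')^{S_3}$ by Theorem~\ref{forms}, and that it is injective: by Corollary~\ref{free} the ideal $F_3'$ is free over $K[X_3]$ on $[x_2,x_1],[x_3,x_1],[x_3,x_2]$, so the first coordinate of $\Phi(p)$ returns $p$ itself. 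The step I would emphasise is that $\Phi$ is not merely $K$-linear but $K[X_3]^{S_3}$-linear: for $g\in K[X_3]^{S_3}$ the variable permutations appearing in the three coordinates fix $g$, so $\Phi(gp)=g\,\Phi(p)$. Hence $\Phi$ is an isomorphism of $K[X_3]^{S_3}$-modules $A\cong(F_3')^{S_3}$, and since $f_{a,b,c}=\Phi\big((x_1^ax_2^b-x_1^bx_2^a)x_3^c\big)$, the corollary is equivalent to the statement that
\[
B=\{(x_1^ax_2^b-x_1^bx_2^a)x_3^c\mid 0\le a<b,\ 0\le c\}
\]
is a basis of the $K[X_3]^{S_3}$-module $A$.

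I would then settle the basis claim directly inside $A$. For spanning, I expand an arbitrary $p\in A$ in monomials $x_1^ix_2^jx_3^k$; antisymmetry forces the coefficients of $x_1^ix_2^jx_3^k$ and $x_1^jx_2^ix_3^k$ to be opposite and kills the diagonal terms $i=j$, so collecting each pair with $i<j$ presents $p$ as a combination of the members of $B$, and therefore $f=\Phi(p)$ as a combination of the $f_{a,b,c}$. For uniqueness the key observation is that the members of $B$ have pairwise disjoint monomial supports: $(x_1^ax_2^b-x_1^bx_2^a)x_3^c$ is supported exactly on $\{x_1^ax_2^bx_3^c,\ x_1^bx_2^ax_3^c\}$, and distinct admissible triples $(a,b,c)$ give disjoint pairs. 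This disjointness reads each $\delta_{abc}$ off as the coefficient of $x_1^ax_2^bx_3^c$ in $p$, so the expansion of $p$, hence of $f$ through $\Phi$, is uniquely determined.

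The point that I expect to demand the most care, and which I would make explicit, is the precise meaning of ``basis'' for the module. The coordinates produced above are scalars, so what the argument proves is that $\{f_{a,b,c}\}$ is a linear basis of the $K[X_3]^{S_3}$-module $(F_3')^{S_3}$, with the $\delta_{abc}$ uniquely determined as these scalars; the isomorphism $\Phi$ is exactly what makes the basis and its coordinates intrinsic to the $K[X_3]^{S_3}$-module structure rather than to an arbitrary $K$-space decomposition. To keep this distinct from freeness I would also note that $(F_3')^{S_3}$ is a free $K[X_3]^{S_3}$-module of rank $3$: writing $A=(x_2-x_1)R$, where $R\subseteq K[X_3]$ is the subalgebra of polynomials symmetric in $x_1$ and $x_2$, one has $R=K[X_3]^{S_3}[x_3]$ with $x_3$ a root of $t^3-\sigma_1t^2+\sigma_2t-\sigma_3$, so $\{1,x_3,x_3^2\}$ is a free basis of $R$ and $\{f_{0,1,0},f_{0,1,1},f_{0,1,2}\}$ a free generating set of $(F_3')^{S_3}$. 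Thus the two facts to verify in tandem are the $K[X_3]^{S_3}$-linearity of $\Phi$ and the disjoint-support description of $B$; the rest is routine monomial bookkeeping.
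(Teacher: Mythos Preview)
Your core argument matches the paper's: both deduce the corollary from Theorem~\ref{forms} by identifying $(F_3')^{S_3}$ with the space $A$ of polynomials antisymmetric in $x_1,x_2$, for which the elements $(x_1^ax_2^b-x_1^bx_2^a)x_3^c$ form an evident $K$-basis; the paper compresses this into the one-line remark immediately preceding the corollary. You are more explicit in naming the isomorphism $\Phi$, checking its $K[X_3]^{S_3}$-linearity, and---usefully---flagging that the ``uniquely determined $\delta_{abc}$'' must be read as scalars, since the set is not free over $K[X_3]^{S_3}$ (e.g.\ $\sigma_1 f_{0,1,0}=f_{0,2,0}+f_{0,1,1}$). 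Your final paragraph, obtaining freeness of rank~$3$ from the factorisation $A=(x_2-x_1)\,K[X_3]^{S_3}[x_3]$ and exhibiting $\{f_{0,1,0},f_{0,1,1},f_{0,1,2}\}$ as a free basis, goes beyond this corollary: the paper defers freeness to its final theorem, where it produces the different basis $\{f_{0,1,0},f_{0,2,0},f_{1,2,0}\}$ via the reduction formulas of Lemma~\ref{tech} and a hands-on independence check. Your route to freeness is shorter and more structural; the paper's has the compensating virtue that its lemma gives an explicit algorithm for rewriting any $f_{a,b,c}$ in terms of the three generators.
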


The rest of the paper is devoted to obtain a generating set for the $K[X_3]^{S_3}$-module $(F_3')^{S_3}$.
Let us denote
\[
\sigma_1=x_1+x_2+x_3, \ \sigma_2=x_1x_2+x_1x_3+x_2x_3, \ \sigma_3=x_1x_2x_3,
\]
which are the elementary symmetric polynomials freely generating the algebra $K[X_3]^{S_3}$. Note that $\{\nu_1,\nu_2,\nu_3\}$
is another generating set for the algebra $K[X_3]^{S_3}$, where $\nu_k$ is defined to be $\nu_k=x_1^k+x_2^k+x_3^k\in K[X_3]^{S_3}$, for each $k\geq1$.

The proof of the following technical lemma is straightforward.

\begin{lemma}\label{tech}
\begin{align}\label{0bc}
f_{0,b,c}&=\nu_cf_{0,b,0}-f_{0,b+c,0}+f_{b,c,0} , \ \ b,c\geq1,
\end{align}
\begin{align}\label{abba}
f_{a,b,c}=-f_{b,a,c} , \ \ a,b,c\geq0,
\end{align}
\begin{align}\label{0b0}
f_{0,b,0}&=\sigma_1f_{0,b-1,0}-\sigma_2f_{0,b-2,0}+\sigma_3f_{0,b-3,0} , \ \ b\geq4
\end{align}
\begin{align}\label{030}
f_{0,3,0}=\sigma_1f_{0,2,0}-\sigma_2f_{0,1,0} ,
\end{align}
\begin{align}\label{ab0}
f_{a,b,0}&=\sigma_1f_{a,b-1,0}-\sigma_2f_{a,b-2,0}+\sigma_3f_{a,b-3,0} , \ \ a\geq1, b\geq4
\end{align}
\begin{align}\label{a30}
f_{a,3,0}=\sigma_1f_{a,2,0}-\sigma_2f_{a,1,0}-\sigma_3f_{0,a,0} , \ \ a\geq1,
\end{align}
\end{lemma}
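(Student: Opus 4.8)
The plan is to verify each of the six identities by direct computation in the free $K[X_3]$-module $F_3'$ with basis $[x_2,x_1]$, $[x_3,x_1]$, $[x_3,x_2]$, as furnished by Corollary \ref{free}, comparing coefficients of these three basis elements. Since each $f_{a,b,c}$ is by definition the sum of a coefficient times $[x_2,x_1]$, a coefficient times $[x_3,x_1]$, and a coefficient times $[x_3,x_2]$, it suffices to check the corresponding polynomial identity in $K[X_3]$ for each of the three components separately; moreover, because all $f_{a,b,c}$ lie in $(F_3')^{S_3}$ by Corollary \ref{linear} and the $S_3$-action permutes the three components cyclically (this is exactly the content of Theorem \ref{forms}), it is in fact enough to verify the $[x_2,x_1]$-component of each identity, the other two following by symmetry. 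So everything reduces to elementary polynomial algebra in $x_1,x_2,x_3$.

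I would carry out the steps in the order the identities are listed. For \eqref{abba}, the $[x_2,x_1]$-component of $f_{a,b,c}$ is $(x_1^ax_2^b-x_1^bx_2^a)x_3^c$, which is manifestly antisymmetric under $a\leftrightarrow b$; this is immediate. For \eqref{0bc}, expand the $[x_2,x_1]$-component of the right-hand side: $\nu_c(x_2^b-x_1^b) - (x_2^{b+c}-x_1^{b+c}) + (x_1^bx_2^c - x_1^cx_2^b)$, and substitute $\nu_c=x_1^c+x_2^c+x_3^c$; the terms $x_1^cx_2^b$, $x_1^bx_2^c$, $x_1^{b+c}$, $x_2^{b+c}$ cancel in pairs, leaving $x_3^c(x_2^b-x_1^b)$, which is the $[x_2,x_1]$-component of $f_{0,b,c}$. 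For \eqref{0b0}, \eqref{030}, \eqref{ab0}, \eqref{a30}: here the $[x_2,x_1]$-component is built from $x_1^ax_2^b-x_1^bx_2^a$ with no $x_3$, and one uses the classical recurrence for power-like expressions, namely that $\sigma_1, \sigma_2, \sigma_3$ are the elementary symmetric functions so that $x_i^3 = \sigma_1 x_i^2 - \sigma_2 x_i + \sigma_3$ holds as an identity once one is careful about which variables actually appear. The subtlety is that in a two-variable expression like $x_1^ax_2^b-x_1^bx_2^a$ only two of the three variables occur, so plain substitution of the three-variable recurrence is not automatic; instead I would track the extra $\sigma_3$-correction terms, which is precisely why \eqref{030} and \eqref{a30} have a different shape from \eqref{0b0} and \eqref{ab0} (the former involve indices where $x_3^0$ or $x_3^1$ appears and a degree-$3$ reduction "uses up" the supply of a variable).

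The main obstacle — though it is bookkeeping rather than a genuine difficulty — is keeping the $\sigma_3$ terms straight in \eqref{a30} and \eqref{030}. Concretely, for \eqref{a30} one wants to reduce $x_1^ax_2^3-x_1^3x_2^a$. Writing $x_2^3 = \sigma_1 x_2^2 - \sigma_2 x_2 + \sigma_3$ and $x_1^3 = \sigma_1 x_1^2 - \sigma_2 x_1 + \sigma_3$, we get
\[
x_1^ax_2^3 - x_1^3x_2^a = \sigma_1(x_1^ax_2^2 - x_1^2x_2^a) - \sigma_2(x_1^ax_2 - x_1x_2^a) + \sigma_3(x_1^a - x_2^a),
\]
and $x_1^a - x_2^a = -(x_2^a - x_1^a)$ is exactly the $[x_2,x_1]$-component of $f_{0,a,0}$ up to sign, accounting for the $-\sigma_3 f_{0,a,0}$ term in \eqref{a30}; the case $a\ge 1$ is needed so that $f_{0,a,0}$ makes sense. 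The analogous check for \eqref{030} (with $a=0$, so the $\sigma_3$ terms cancel against each other since $x_1^0-x_2^0=0$) explains why no $\sigma_3$ correction appears there. For \eqref{0b0} and \eqref{ab0} with $b\ge 4$, the reduction $x_2^b = \sigma_1 x_2^{b-1} - \sigma_2 x_2^{b-2} + \sigma_3 x_2^{b-3}$ and similarly for $x_1^b$ produces no leftover terms because $b-3\ge 1$ keeps every exponent positive, so the three-term recurrence closes cleanly. After assembling the $[x_2,x_1]$-components, I would remark once that the $[x_3,x_1]$- and $[x_3,x_2]$-components follow by applying the cyclic substitutions from Theorem \ref{forms}, completing the proof.
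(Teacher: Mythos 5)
Your proof is correct: the paper itself gives no argument (it declares the lemma straightforward), and your componentwise verification in the free $K[X_3]$-module of Corollary \ref{free}, using $x_i^3=\sigma_1x_i^2-\sigma_2x_i+\sigma_3$ and tracking the leftover $\sigma_3$-terms that distinguish \eqref{030} and \eqref{a30} from \eqref{0b0} and \eqref{ab0}, is exactly the intended routine computation. Your reduction to checking only the $[x_2,x_1]$-component is also sound, since the other two components arise from it by the substitutions implicit in Theorem \ref{forms}, which fix the symmetric coefficients $\sigma_i$ and $\nu_c$.
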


\

The next two results are main theorems of the paper.

\begin{theorem}\label{main}
$(F'_3)^{S_3}$ is generated by the set $\{f_{0,1,0},f_{0,2,0},f_{1,2,0}\}$ as a left $K[X_3]^{S_3}$-module.
\end{theorem}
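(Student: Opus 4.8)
The plan is to show that every basis element $f_{a,b,c}$ from Corollary~\ref{linear} lies in the $K[X_3]^{S_3}$-submodule generated by $\{f_{0,1,0},f_{0,2,0},f_{1,2,0}\}$, since by that corollary these basis elements span $(F_3')^{S_3}$ over $K[X_3]^{S_3}$. We induct on the total degree $n=a+b+c$ (equivalently on the degree of $f_{a,b,c}$ as a polynomial), using the identities assembled in Lemma~\ref{tech} to rewrite any $f_{a,b,c}$ in terms of $f$'s of strictly smaller total degree, together with $\sigma_1,\sigma_2,\sigma_3$-coefficients. By \eqref{abba} we may assume $a\le b$ (and the case $a=b$ gives $f_{a,a,c}=0$), so the genuine cases are $0\le a<b$.

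First I would dispose of the $c=0$ strata. By \eqref{030} and \eqref{0b0}, each $f_{0,b,0}$ with $b\ge 3$ reduces via $\sigma_1,\sigma_2,\sigma_3$ to combinations of $f_{0,1,0}$ and $f_{0,2,0}$; so all $f_{0,b,0}$ are in the desired submodule. Next, for $f_{a,b,0}$ with $a\ge 1$: identity \eqref{a30} handles $b=3$, reducing it to $f_{a,2,0},f_{a,1,0}$ and the already-controlled $f_{0,a,0}$; identity \eqref{ab0} then handles $b\ge 4$ by descent on $b$ down to $b\in\{1,2,3\}$. This leaves the finitely many ``small'' cases $f_{a,1,0}$ and $f_{a,2,0}$ with $a\ge 1$; since we are reducing within the $c=0$ stratum these have smaller degree arguments to fall back on, but the base of the whole recursion is really $f_{1,2,0}$ (and $f_{0,1,0}, f_{0,2,0}$), so I would organize the induction so that $f_{a,1,0}$ with $a\ge 2$ and $f_{a,2,0}$ with $a\ge 3$ are themselves reached by \eqref{ab0}/\eqref{a30}-type relabelings or by \eqref{abba} applied after a reduction — concretely, $f_{a,1,0}=-f_{1,a,0}$ and then \eqref{ab0}/\eqref{a30} apply with the roles of $a$ and $b$ swapped, and similarly $f_{a,2,0}=-f_{2,a,0}$. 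Thus the entire $c=0$ part collapses onto $\{f_{0,1,0},f_{0,2,0},f_{1,2,0}\}$.

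Finally, for $c\ge 1$: first reduce $a$ to $0$. If $a\ge 1$ we need an identity expressing $f_{a,b,c}$ with $a\ge1,c\ge1$ in terms of lower-degree $f$'s; the cleanest route is to note $x_3^c[x_2,x_1]$-type coefficients can be traded using $\nu_c$ just as in \eqref{0bc}, or more directly to use \eqref{0bc} in reverse together with \eqref{abba} — so I would either invoke an analogue of \eqref{0bc} for general $a$ (provable by the same straightforward computation that gave Lemma~\ref{tech}) or, preferably, reduce to $a=0$ by observing that the ``$x_3$-homogeneous'' structure lets one strip one factor of $x_3$ at a time against a $\nu_1=\sigma_1$ coefficient, lowering $c$. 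Once $a=0$, identity \eqref{0bc} writes $f_{0,b,c}$ ($b,c\ge1$) as $\nu_c f_{0,b,0}-f_{0,b+c,0}+f_{b,c,0}$: the first two terms are $c=0$ cases already handled, and $f_{b,c,0}$ is a $c=0$ case of strictly smaller... wait — $f_{b,c,0}$ has degree $b+c<b+c+c'$ only when $c\ge1$, which holds, so it has smaller total degree than $f_{0,b,c}$ and the induction closes.

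The main obstacle I anticipate is precisely the bookkeeping in the $c\ge 1$, $a\ge 1$ case: Lemma~\ref{tech} as stated only gives $f_{0,b,c}$ (the $a=0$ reduction \eqref{0bc}) and the $c=0$ relations, so either I must supply the (routine but unstated) general-$a$ analogue of \eqref{0bc}, or I must argue that repeated use of \eqref{0bc}, \eqref{abba}, and the $c=0$ identities already suffices — which requires checking that the total degree genuinely drops at each rewriting step and that no cycle occurs among the finitely many small cases. A clean way to make this airtight is a double induction (outer on $n=a+b+c$, inner on $c$), verifying at each stage that every identity invoked has right-hand side with strictly smaller $(n,c)$ in lexicographic order; the base cases $n\le 3$ are then a short finite check. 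I would present the argument in that form, citing Lemma~\ref{tech} for each reduction and Corollary~\ref{linear} for the reduction to the $f_{a,b,c}$ basis, and remark that freeness of the generating set is deferred to the next theorem.
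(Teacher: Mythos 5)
Your overall reduction scheme — pass to the basis $\{f_{a,b,c}\}$ via Corollary~\ref{linear}, dispose of the $c=0$ stratum with \eqref{030}, \eqref{0b0}, \eqref{a30}, \eqref{ab0} together with \eqref{abba} (including the device $f_{a,1,0}=-f_{1,a,0}$, $f_{a,2,0}=-f_{2,a,0}$ for the small second index), and treat $f_{0,b,c}$ with \eqref{0bc} — is essentially the paper's argument, and those parts are fine. The genuine gap is exactly the case you flag yourself: $a\ge 1$ and $c\ge 1$. You offer two routes (an unstated ``general-$a$ analogue of \eqref{0bc}'', or ``stripping one factor of $x_3$ against $\nu_1=\sigma_1$''), but neither is written down or verified, and the second is too vague to check for termination. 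The paper eliminates this case before anything else by a one-line observation you miss: since $a<b$, each of the three coefficients of $f_{a,b,c}$ contains the common factor $(x_1x_2x_3)^{m}$ with $m=\min(a,c)$, so
\[
f_{a,b,c}=\sigma_3^{\,m}\,f_{a-m,\,b-m,\,c-m},
\]
which lands immediately in a stratum with $a=0$ or $c=0$ (Cases I--III of the paper's proof); after that only the Lemma~\ref{tech} reductions you already use are needed.

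That said, your first suggested route is completable: the general-$a$ analogue of \eqref{0bc} does hold, namely
\[
f_{a,b,c}=\nu_c f_{a,b,0}-f_{a+c,b,0}-f_{a,b+c,0},\qquad a\ge 0,\ b,c\ge 1,
\]
which one checks on the $[x_2,x_1]$-coefficient (the $[x_3,x_1]$- and $[x_3,x_2]$-coefficients follow by the built-in symmetry of the $f$'s and of $\nu_c$), and it is consistent with \eqref{0bc} since $f_{c,b,0}=-f_{b,c,0}$. Its right-hand side involves only $c=0$ elements, so your proposed lexicographic induction (first the $c=0$ stratum, then $c\ge1$) closes without any cycling. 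So the plan can be repaired, but as submitted the pivotal reduction for $a,c\ge1$ is asserted rather than proved; you should either insert and verify this identity or adopt the paper's $\sigma_3^{\min(a,c)}$ factorization, after which your argument coincides with the paper's proof (freeness being, as you note, the content of the following theorem).
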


\begin{proof}
The first observation by Corollary \ref{linear} is that the elements of the form $f_{a,b,c}$ such that
$0\leq a<b, 0\leq c$ generate the $K[X_3]^{S_3}$-module $(F'_3)^{S_3}$. Depending on the relations between $a$ and $c$, we may handle the problem in three cases.

\noindent Case I. Let $a=c$, and $b=a+k$ for some positive integer $k$. Then
\begin{align}
f_{a,b,c}&=(x_1^ax_2^b-x_1^bx_2^a)x_3^a[x_2,x_1]+(x_1^ax_3^b-x_1^bx_3^a)x_2^a[x_3,x_1]+(x_2^ax_3^b-x_2^bx_3^a)x_1^a[x_3,x_2]\nonumber\\
&=(x_1x_2x_3)^a\left((x_2^k-x_1^k)[x_2,x_1]+(x_3^k-x_1^k)[x_3,x_1]+(x_3^k-x_2^k)[x_3,x_2]\right)\nonumber\\
&=\sigma_3^af_{0,k,0}\nonumber
\end{align}

\noindent Case II. Let $a<c$, and $b=a+k$, $c=a+l$, for some $k,l\in\mathbb{Z}^+$. Then similarly, we get that 
$f_{a,b,c}=\sigma_3^af_{0,k,l}$.

\noindent Case III. Let $c<a$, and $a=c+k$, $b=c+l$, for some $k,l\in\mathbb{Z}^+$. Then we have
$f_{a,b,c}=\sigma_3^cf_{k,l,0}$.

Therefore, the generating set reduces to the set consisting of the elements of the form $f_{0,b,0}$, $f_{a,b,0}$, and $f_{0,b,c}$.
Now by Lemma \ref{tech} options (\ref{0bc}) and  (\ref{abba}), we may eliminate the elements $f_{0,b,c}$ from the generating set, as well.
Note that we use the option (\ref{abba}) in case $c<b$, in order to fix the notation as in Corollary \ref{linear}.

Considering Lemma \ref{tech} options (\ref{0b0}) and  (\ref{030}), one may conclude by induction that every element of the form $f_{0,b,0}$, $b\geq3$,
is included in the $K[X_3]^{S_3}$-module generated by $f_{0,1,0}$ and $f_{0,2,0}$.
Similarly, making use of Lemma \ref{tech} options (\ref{ab0}), (\ref{a30}), we get that elements of the form $f_{a,b,0}$, $b\geq3$,
are included in the $K[X_3]^{S_3}$-module generated by $\{f_{a,1,0},f_{a,2,0},f_{0,a,0}\}$, or by $\{f_{1,a,0},f_{2,a,0},f_{0,a,0}\}$ via Lemma \ref{tech} (\ref{abba}).
Note that $f_{1,1,0}=f_{2,2,0}=0$, $f_{2,1,0}=-f_{1,2,0}$, and
elements of the form $f_{0,a,0}$ are in the $K[X_3]^{S_3}$-module generated by $\{f_{0,1,0},f_{0,2,0}\}$ when $a\geq3$.
Hence the elements $f_{1,a,0}$ and $f_{2,a,0}$, $a\geq3$, are included in the
$K[X_3]^{S_3}$-module generated by the elements $f_{1,2,0}$, $f_{0,1,0}$ and $f_{0,2,0}$ using Lemma  \ref{tech} options (\ref{0b0}), (\ref{ab0}), and (\ref{a30}),
inductively. 
\end{proof}

\begin{theorem}\label{main}
The set $\{f_{0,1,0},f_{0,2,0},f_{1,2,0}\}$ is a minimal generating set for the $K[X_3]^{S_3}$-module $(F'_3)^{S_3}$.
Moreover, $(F'_3)^{S_3}$ is freely generated by the elements $f_{0,1,0}$, $f_{0,2,0}$, and $f_{1,2,0}$ as a $K[X_3]^{S_3}$-module.
\end{theorem}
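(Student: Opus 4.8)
The plan is to establish the two assertions of the theorem separately: first that $\{f_{0,1,0},f_{0,2,0},f_{1,2,0}\}$ generates $(F_3')^{S_3}$ (already done in the previous theorem), and then that this generating set is \emph{free}, i.e.\ that there is no nontrivial $K[X_3]^{S_3}$-linear relation among these three elements; freeness automatically gives minimality as well. So the real content is the freeness claim, and I would phrase it as: if $g_1 f_{0,1,0}+g_2 f_{0,2,0}+g_3 f_{1,2,0}=0$ with $g_1,g_2,g_3\in K[X_3]^{S_3}$, then $g_1=g_2=g_3=0$.

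The key reduction is to push this relation into the free $K[X_3]$-module $F_3'=K[X_3][x_2,x_1]\oplus K[X_3][x_3,x_1]\oplus K[X_3][x_3,x_2]$ of Corollary \ref{free}. Writing out $f_{0,1,0}$, $f_{0,2,0}$, $f_{1,2,0}$ in that basis and collecting the coefficient of, say, $[x_2,x_1]$, the relation $g_1 f_{0,1,0}+g_2 f_{0,2,0}+g_3 f_{1,2,0}=0$ becomes the single polynomial identity
\[
g_1(x_2-x_1)+g_2(x_2^2-x_1^2)+g_3(x_1x_2^2-x_1^2x_2)=0
\]
in $K[X_3]$ (the coefficients of $[x_3,x_1]$ and $[x_3,x_2]$ give the same identity after a permutation of variables, by the symmetry built into the $f_{a,b,0}$, so nothing new). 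Factoring out $(x_2-x_1)$, which is a nonzerodivisor in the domain $K[X_3]$, this is equivalent to
\[
g_1+g_2(x_1+x_2)+g_3 x_1x_2=0 .
\]
Now I would argue by a degree/leading-term analysis in the variable $x_3$, or more cleanly by specialization: the $g_i$ are symmetric in $x_1,x_2,x_3$, whereas the polynomials $1$, $x_1+x_2$, $x_1x_2$ are only symmetric in $x_1,x_2$. Treating everything as polynomials in $x_3$ over the field $K(x_1,x_2)$, and using that $g_1,g_2,g_3$ are individually symmetric under $x_1\leftrightarrow x_2$, one shows each homogeneous-in-$x_3$ component of the relation forces $g_3=0$, then $g_2=0$, then $g_1=0$; alternatively, one can invoke that $K[X_3]^{S_3}\cong K[\sigma_1,\sigma_2,\sigma_3]$ is a free $K[x_1+x_2,\,x_1x_2]$-module (or track degrees carefully) to conclude $g_1,g_2,g_3$ all vanish.

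The main obstacle, I expect, is making the last step fully rigorous rather than hand-wavy: one must genuinely exploit that $g_1,g_2,g_3$ lie in the smaller ring $K[X_3]^{S_3}$ and not merely in $K[x_1,x_2]^{S_2}[x_3]$, since over the latter ring the three elements $1,\,x_1+x_2,\,x_1x_2$ are certainly \emph{not} linearly independent in a relation-free sense once $x_3$ is allowed to appear. The cleanest route is probably: suppose a minimal-total-degree counterexample, specialize $x_3=0$ to get $g_1|_{x_3=0}+g_2|_{x_3=0}(x_1+x_2)+g_3|_{x_3=0}x_1x_2=0$ where now the restricted coefficients satisfy the symmetric-function relations $\sigma_i|_{x_3=0}=e_i(x_1,x_2)$, forcing (by comparing degrees in $K[x_1,x_2]$ and using $1,x_1+x_2,x_1x_2$ being part of a polynomial basis) that $g_i|_{x_3=0}=0$, hence $x_3\mid g_i$, hence $\sigma_3\mid g_i$ by symmetry, and then divide through by $\sigma_3$ to contradict minimality. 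Once freeness is in hand, minimality is immediate — a relation expressing one generator in terms of the others would be a nontrivial syzygy — so the theorem follows.
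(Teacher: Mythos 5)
Your overall architecture matches the paper's: generation is quoted from the preceding theorem, freeness gives minimality for free (the paper proves minimality separately, so your observation is a legitimate small simplification), and you correctly reduce everything, via the coefficient of $[x_2,x_1]$ in the free $K[X_3]$-module of Corollary \ref{free} and cancellation of the nonzerodivisor $x_2-x_1$, to showing that $g_1+g_2(x_1+x_2)+g_3x_1x_2=0$ with $g_1,g_2,g_3\in K[X_3]^{S_3}$ forces $g_1=g_2=g_3=0$. The gap is in how you finish this step, and it is a real one. Your minimal-degree argument specializes at $x_3=0$ and claims the specialized identity forces $g_i|_{x_3=0}=0$; but after specialization the coefficients are just arbitrary elements of $K[x_1+x_2,\,x_1x_2]$, and over that ring the triple $1$, $x_1+x_2$, $x_1x_2$ does admit nontrivial relations, e.g.
\[
0\cdot 1+(x_1x_2)\,(x_1+x_2)+\bigl(-(x_1+x_2)\bigr)\,x_1x_2=0,
\]
so the vanishing you need is simply not forced and the ``divide by $\sigma_3$ and descend'' step never gets off the ground. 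The same objection defeats the ``homogeneous-in-$x_3$ components'' variant: the $x_3$-components of the $g_i$ are only $S_2$-symmetric in $x_1,x_2$, which is precisely the insufficient situation you yourself flag. Finally, the assertion that $K[X_3]^{S_3}$ is a free $K[x_1+x_2,x_1x_2]$-module is not meaningful as stated, since $K[\sigma_1,\sigma_2,\sigma_3]$ is not closed under multiplication by $x_1+x_2$; the correct statement in that spirit is that $K[x_1+x_2,x_1x_2,x_3]$ is a free $K[X_3]^{S_3}$-module in which $\{1,\,x_1+x_2,\,x_1x_2\}$ is a basis (it differs from $\{1,x_3,x_3^2\}$ by a unimodular transition matrix), but that freeness itself requires proof that you do not supply.

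What is actually needed --- and what the paper does --- is to play the full $S_3$-invariance of the $g_i$ against the asymmetry in $x_3$ of the relation itself. Apply the transposition $(23)$ to $g_1+g_2(x_1+x_2)+g_3x_1x_2=0$: since the $g_i$ are fixed, this gives $g_1+g_2(x_1+x_3)+g_3x_1x_3=0$; subtracting yields $(x_2-x_3)(g_2+g_3x_1)=0$, hence $g_2+g_3x_1=0$. Applying $(12)$ to this last identity gives $g_2+g_3x_2=0$, so $g_3(x_1-x_2)=0$, whence $g_3=0$, then $g_2=0$, and finally $g_1=0$. Your reduction and your conclusion are correct, but as written the decisive step of the freeness argument is asserted rather than proved, and the specific mechanism you propose for it fails.
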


\begin{proof}
Recall that $f_{0,1,0}$, $f_{0,2,0}$, and $f_{1,2,0}$ are of homogeneous degree $3$, $4$, and $5$, in the graded algebra $F_3'$, of the form
\[
f_{0,1,0}=(x_2-x_1)[x_2,x_1]+(x_3-x_1)[x_3,x_1]+(x_3-x_2)[x_3,x_2]
\]
\[
f_{0,2,0}=(x_2^2-x_1^2)[x_2,x_1]+(x_3^2-x_1^2)[x_3,x_1]+(x_3^2-x_2^2)[x_3,x_2]
\]
\[
f_{1,2,0}=(x_1x_2^2-x_1^2x_2)[x_2,x_1]+(x_1x_3^2-x_1^2x_3)[x_3,x_1]+(x_2x_3^2-x_2^2x_3)[x_3,x_2]
\]

\noindent Clearly the element $f_{0,1,0}$ of degree $3$ is not included in the $K[X_3]^{S_3}$-module generated by $\{f_{0,2,0},f_{1,2,0}\}$.
On the other hand, assuming that $f_{0,2,0}$ is  in the $K[X_3]^{S_3}$-module generated by $\{f_{0,1,0},f_{1,2,0}\}$, we have 
$f_{0,2,0}=\alpha\sigma_1f_{0,1,0}$
for some $\alpha\in K$. Now by Corollary \ref{free}, considering the coefficient of $[x_2,x_1]$
in the free $K[X_3]$-module $F_3'$ generated by $[x_2,x_1]$, $[x_3,x_1]$, and $[x_3,x_2]$,  we have that 
\[
(x_2^2-x_1^2)=\alpha(x_1+x_2+x_3)(x_2-x_1)
\]
and hence $\alpha=0$, which contradicts with $f_{0,2,0}\neq0$.
Besides, if $f_{1,2,0}$ is included in the $K[X_3]^{S_3}$-module generated by $\{f_{0,1,0},f_{0,2,0}\}$, then 
\[
f_{1,2,0}=(\alpha\sigma_1^2+\beta\sigma_2)f_{0,1,0}+\gamma\sigma_1f_{0,2,0}
\]
for some $\alpha,\beta,\gamma\in K$. Similarly, let us consider the coefficient
\begin{align}
(x_1x_2^2-x_1^2x_2)=&\left(\alpha(x_1+x_2+x_3)^2+\beta(x_1x_2+x_1x_3+x_2x_3)\right)(x_2-x_1)\nonumber\\
&+\gamma(x_1+x_2+x_3)(x_2^2-x_1^2)\nonumber
\end{align}
of $[x_2,x_1]$ in the equation. Clearly $\alpha x_3^2x_2=0$ gives $\alpha=0$, and then by coefficients of $x_1x_2^2$ and $x_2^2x_3$ we have
$\beta+\gamma=1$ and $\beta+\gamma=0$, which is a contradiction. Therefore, the set $\{f_{0,1,0},f_{0,2,0},f_{1,2,0}\}$ is a minimal generating set.

Finally, we have to show that $f_{0,1,0}$, $f_{0,2,0}$, and $ f_{1,2,0}$ are free generators.
Note that $f_{0,1,0}^2=f_{0,2,0}^2=f_{1,2,0}^2=0$ due to the equation (\ref{theorem}), and thus we may assume that these elements can be taken linearly in a possible relation.
Let 
\[
s_1f_{0,1,0}+s_2f_{0,2,0}+s_3f_{1,2,0}=0
\]
for some symmetric polynomials $s_1,s_2,s_3\in K[X_3]^{S_3}$. In the same way, by the coefficient of $[x_2,x_1]$, we obtain that
\[
s_1(x_2-x_1)+s_2(x_2^2-x_1^2)+s_3(x_1x_2^2-x_1^2x_2)=0
\]
or
\[
s_1+s_2(x_2+x_1)+s_3x_1x_2=0
\]
in the commutative polynomial algebra $K[X_3]$. This means that the polynomial 
\[
s_2(x_2+x_1)+s_3x_1x_2
\]
is symmetric, which is preserved under the action of the transposition $(23)\in S_3$. Then we have the following.
\begin{align}
s_2(x_2+x_1)+s_3x_1x_2&=s_2(x_3+x_1)+s_3x_1x_3\nonumber\\
s_2x_2+s_3x_1x_2&=s_2x_3+s_3x_1x_3\nonumber\\
x_2(s_2+s_3x_1)&=x_3(s_2+s_3x_1)\nonumber
\end{align}
Therefore, $(x_2-x_3)(s_2+s_3x_1)=0$, implying that $s_2+s_3x_1=0$. This implies that $s_3x_1$ is symmetric.
However, $s_3x_1=(12)\cdot s_3x_1=s_3x_2$ gives $s_3=0$, where $(12)\in S_3$. Hence $s_2=s_3x_1=0$, and $s_1=-s_2(x_2+x_1)-s_3x_1x_2=0$.
\end{proof}

We give the next example to illustrate how to express a symmetric polynomial in terms of free generators $f_{0,1,0}$, $f_{0,2,0}$, and $f_{1,2,0}$.

\begin{example} Let us consider the symmetric polynomial
\[
f_{2,4,5}=(x_1^2x_2^4-x_1^4x_2^2)x_3^5[x_2,x_1]+(x_1^2x_3^4-x_1^4x_3^2)x_2^5[x_3,x_1]+(x_2^2x_3^4-x_2^4x_3^2)x_1^5[x_3,x_2].
\]
 Then we have that $f_{2,4,5}=\sigma_3^2f_{0,2,3}$ by Case II of the proof of Theorem \ref{main}.
Now by 
Lemma \ref{tech} (\ref{0bc}),
\[
f_{2,4,5}=\sigma_3^2(\nu_3f_{0,2,0}-f_{0,5,0}+f_{2,3,0})
\]
and by options (\ref{0b0}), (\ref{a30}) and (\ref{abba}) of  Lemma \ref{tech}, we have that
\begin{align*}
f_{2,4,5}&
=\sigma_3^2(\nu_3f_{0,2,0}-(\sigma_1f_{0,4,0}-\sigma_2f_{0,3,0}+\sigma_3f_{0,2,0})+(\sigma_1f_{2,2,0}-\sigma_2f_{2,1,0}-\sigma_3f_{0,2,0}))\\&
=\sigma_3^2(\nu_3f_{0,2,0}-(\sigma_1(\sigma_1f_{0,3,0}-\sigma_2f_{0,2,0}+\sigma_3f_{0,1,0})-\sigma_2f_{0,3,0}+\sigma_3f_{0,2,0})\\&\ \ \ \ \ \ \ \
+(\sigma_2f_{1,2,0}-\sigma_3f_{0,2,0}))\\& 
=\sigma_3^2(-\sigma_1\sigma_3f_{0,1,0}+(\nu_3+\sigma_1\sigma_2-2\sigma_3)f_{0,2,0}+(-\sigma_1^2+\sigma_2)f_{0,3,0}+\sigma_2f_{1,2,0})\\&
=\sigma_3^2(-\sigma_1\sigma_3f_{0,1,0}+(\nu_3+\sigma_1\sigma_2-2\sigma_3)f_{0,2,0}+(-\sigma_1^2+\sigma_2)(\sigma_1f_{0,2,0}-\sigma_2f_{0,1,0})\\&\ \ \ \ \ \ \ \ +\sigma_2f_{1,2,0})\\&
=(-\sigma_1\sigma_3^3+\sigma_1^2\sigma_2\sigma_3^2-\sigma_2^2\sigma_3^2)f_{0,1,0}\\& \ \ \ +(\nu_3\sigma_3^2+\sigma_1\sigma_2\sigma_3^2-2\sigma_3^3-\sigma_1^3\sigma_3^2+\sigma_1\sigma_2\sigma_3^2)f_{0,2,0}+\sigma_2\sigma_3^2f_{1,2,0}.
\end{align*}
\end{example}

\noindent We complete the paper by the next problem.

\begin{problem}
Determine symmetric polynomials in $F'_n$ for $n\geq4$.
\end{problem}


\begin{thebibliography}{99}

\bibitem{Na}
N. Akdo\u{g}an, 
On the symmetric polynomials in the variety of Grassmann algebras,
Erzincan Un. J. Sci. Tech., to appear.

\bibitem{B}
R. Bryant, A. Papistas,
On the fixed points of a finite group acting on a relatively free lie algebra,
Glasgow Mathematical Journal, 42 (2000) 2, 167-181.

\bibitem{Cen}
L. Centrone,
On the graded identities of the Grassmann algebra,
J. Algebra Comb. Discrete Appl. 4 (2) (2016) 165-180.

\bibitem{C}
C. Chevalley, 
Invariants of finite groups generated by reflections,
Amer. J. Math., 77 (1955), 778-782.

\bibitem{D}
W. Dicks, E. Formanek, 
Poincare series and a problem of S. Montgomery, 
Lin. Mult. Alg. 12 (1982), 21-30.

\bibitem{DM}
M. Domokos, 
Relatively free invariant algebras of finite reflection groups, 
Trans. Amer. Math. Soc., 348 (1996), 2217-2233.

\bibitem{DV} 
V. Drensky, 
Free Algebras and PI-Algebras, 
Springer, Singapore, 1999.

\bibitem{DV2}
V. Drensky, \c{S}. F{\i}nd{\i}k, N.\c{S}. \"O\u{g}\"u\c{s}l\"u, 
Symmetric polynomials in the free metabelian Lie algebras, 
Mediterr. J. Math., 17 (2020) 5, 1-11.

\bibitem{F} \c{S}. F{\i}nd{\i}k, N.\c{S}. \"O\u{g}\"u\c{s}l\"u, 
Palindromes in the free metabelian Lie algebras, 
Int. J. Algebra and Comput. 29 (2019) 5, 885-891.

\bibitem{FO}
\c{S}. F{\i}nd{\i}k, Z. \"Ozkurt, 
Symmetric polynomials in Leibniz algebras and their inner automorphisms,
Turkish. J. Math., 44 (2020) 6, 2306-2311.

\bibitem{FE}
E. Formanek, 
Noncommutative invariant theory, 
Contemp. Math., 43 (1985), 87-119.

\bibitem{GK}
A. Giambruno, P. Koshlukov, On the identities
of the Grassmann algebras in characteristic $p>0$,
Isr. J. Math., 122 (2001), 305-316.

\bibitem{H}
D. Hilbert, 
Ueber die Theorie der algebraischen Formen,
Math. Ann., 36 (1890), 473-534.

\bibitem{H2}
D. Hilbert, 
Ueber die vollen Invariantensysteme,
Math. Ann., 42 (1893), 313-373.

\bibitem{H3}
D. Hilbert, 
Mathematische Probleme, 
G\"ottinger Nachrichten (1900), 253-297; 
Arch. Math. u. Phys., 3 (1901) 1, 44-63; 
Translation: Bull. Amer. Math. Soc., 8 (1902) 10, 437-479.

\bibitem{HJE}
J.E. Humpreys,
Hilbert’s Fourteenth Problem,
Amer. Math. Monthly Vol., 85 (1978) 5, 341-353.

\bibitem{K}
V.K. Kharchenko, 
Algebras of invariants of free algebras, 
Algebra i Logika 17 (1978), 478-487.
Translation: Algebra and Logic, 17(1978), 316-321.

\bibitem{K2}
V.K. Kharchenko, 
Noncommutative invariants of finite groups and Noetherian varieties, 
J.Pure Appl. Alg., 31 (1984), 83-90.

\bibitem{KD}
D. Krakovski, A. Regev, 
The polynomial identities of the Grassmann algebra, 
Trans. Amer. Math. Soc. 181 (1973), 429-438.

\bibitem{L}
D.R. Lane, 
Free algebras of rank two and their automorphisms, 
Ph.D. thesis, Bedford College, London, 1976.

\bibitem{M}
L. Makar-Limanov, 
On Grassmann algebras of graphs,
Journal of algebra, 87 (1984) , 283-289.

\bibitem{N}
M. Nagata, 
On the 14-th problem of Hilbert,
Amer. J. Math., 81 (1959), 766-772.

\bibitem{NE}
E. Noether, 
Der Endlichkeitssatz der Invarianten endlicher Gruppen, 
Math. Ann., 77 (1916), 89-92.

\bibitem{S}
G.C. Shephard, J. A. Todd, 
Finite unitary reflection groups, 
Can. J. Math., 6 (1954), 274-304.

\end{thebibliography}
\end{document}